\documentclass[11pt]{article}

\usepackage[letterpaper,margin=1.1in]{geometry}
\usepackage[T1]{fontenc}
\usepackage{amsmath,amssymb,amsthm}
\usepackage{newpxtext}
\usepackage{newpxmath}
\usepackage{mathtools}
\usepackage{microtype}
\usepackage[hidelinks]{hyperref}
\usepackage[nameinlink,noabbrev]{cleveref}
\usepackage{enumitem}

\numberwithin{equation}{section}
\allowdisplaybreaks

\newtheorem{theorem}{Theorem}[section]
\newtheorem{proposition}[theorem]{Proposition}

\newtheorem{corollary}[theorem]{Corollary}
\theoremstyle{definition}
\newtheorem{definition}[theorem]{Definition}

\theoremstyle{remark}
\newtheorem{remark}[theorem]{Remark}

\crefname{theorem}{theorem}{theorems}
\crefname{proposition}{proposition}{propositions}
\crefname{lemma}{lemma}{lemmas}
\crefname{corollary}{corollary}{corollaries}
\crefname{definition}{definition}{definitions}
\crefname{example}{example}{examples}
\crefname{remark}{remark}{remarks}

\newcommand{\SP}{\mathrm{SP}}

\newcommand{\vac}{\mathbf{1}}
\newcommand{\cQ}{\mathcal{Q}}
\newcommand{\Yop}{\mathcal{Y}}
\newcommand{\Ht}{\mathsf{H}}
\newcommand{\Rt}{\mathcal{R}_t}
\newcommand{\Pf}{\operatorname{Pf}}

\newcommand{\QQ}{\mathbb{Q}}
\newcommand{\ZZ}{\mathbb{Z}}
\newcommand{\EE}{\mathbb{E}}
\newcommand{\Var}{\operatorname{Var}}

\title{A Shifted \texorpdfstring{$t$}{t}-Schur Weight from the Modified Odd Operator}
\author{S.-J. Lee}
\date{}

\begin{document}

\maketitle

\begin{abstract}
We study the one-time weight on strict partitions obtained from the
modified odd Greaves--Jing--Zhu operator.  The shifted $t$-Schur
functions generated by this operator are obtained from the classical
Schur $Q$-functions by the plethystic substitution $X\mapsto X-tX$.
Thus the corresponding weight
\[
        \lambda\longmapsto \cQ_\lambda(X;t)P_\lambda(Y)
\]
is a shifted Schur weight with a virtual first alphabet.  We give its
normalization, its Pfaffian correlation kernel, its Fredholm Pfaffian for
the largest part, and its size cumulants.  For $t=-q$ with $q\geq 0$ the
virtual alphabet becomes the positive alphabet $X+qX$, giving a genuine
probability measure.  This positive specialization is the one-time
marginal of the two-color lift considered in a companion note.
\end{abstract}

\tableofcontents

\section{Introduction}

Greaves, Jing, and Zhu introduced a vertex-operator construction for the
$t$-Schur functions and used it to study the $t$-Schur measure
\cite{GJZ}.  The modified odd version of their operator produces
symmetric functions indexed by strict partitions; these shifted
$t$-Schur functions were introduced in \cite{LeeGessel}.  They
specialize to the Schur $Q$-functions when $t=0$ and satisfy odd
analogues of the basic formulas in Schur $Q$-theory.  The diagonal
change-of-variables viewpoint for the modified operator appears in
\cite{LeeMixed}, and transition matrices between shifted $t$-Schur bases
are studied in \cite{LeeTransitions}.

The purpose of this article is to isolate the one-time probabilistic
object attached to the modified odd operator.  The construction
identifies this object with a shifted Schur measure whose first
specialization is the virtual alphabet $X-tX$.  This identification
transfers the Pfaffian correlation structure of the shifted Schur measure
to the shifted $t$-Schur weight.

The key point is the plethystic identity
\begin{equation}\label{eq:intro-plethysm}
        \cQ_\lambda(X;t)=Q_\lambda[X-tX],
        \qquad p_n[X-tX]=(1-t^n)p_n(X)\quad(n\text{ odd}).
\end{equation}
Consequently, the one-time shifted $t$-Schur weight is
\begin{equation}\label{eq:intro-weight}
        W_t^{X,Y}(\lambda)=\cQ_\lambda(X;t)P_\lambda(Y)
        =Q_\lambda[X-tX]P_\lambda(Y).
\end{equation}
For general $t$ this is a formal weight, because $X-tX$ is a virtual
alphabet.  When $t=-q$ and $q\geq0$, however,
\[
        X-tX=X+qX,
\]
and the weight becomes positive for nonnegative specializations $X$ and
$Y$ satisfying the usual convergence conditions.

The main formulas are as follows.  The normalizing constant is
\begin{equation}\label{eq:intro-Zt}
        Z_t(X,Y)
        =\exp\left(
        2\sum_{\substack{n\geq1\\ n\text{ odd}}}
        \frac{(1-t^n)p_n(X)p_n(Y)}{n}
        \right).
\end{equation}
For finite alphabets this is
\begin{equation}\label{eq:intro-Zt-product}
        Z_t(X,Y)
        =\prod_{i,j}
        \frac{(1+x_i y_j)(1-tx_i y_j)}{(1-x_i y_j)(1+tx_i y_j)}.
\end{equation}
The associated point process on $\ZZ_{>0}$ is Pfaffian, with Matsumoto's
shifted-Schur kernel specialized to the symbol
\begin{equation}\label{eq:intro-Jt}
        J_t(z;X,Y)=F_{X-tX}(z)F_Y(-z^{-1}).
\end{equation}
Here
\[
        F_A(z)=\exp\left(
        2\sum_{\substack{n\geq1\\n\text{ odd}}}
        \frac{p_n(A)}{n}z^n\right).
\]
Finally, the cumulants of the size $|\lambda|$ are
\begin{equation}\label{eq:intro-cumulants}
        \kappa_m(|\lambda|)
        =2\sum_{\substack{n\geq1\\ n\text{ odd}}}
        n^{m-1}(1-t^n)p_n(X)p_n(Y).
\end{equation}
At $t=-q$, this formula splits into the two contributions corresponding
to the alphabets $X$ and $qX$.

\section{Schur \texorpdfstring{$Q$}{Q}-functions and plethystic notation}

We use standard notation for Schur $Q$-functions; see
\cite[Chapter III, Section 8]{Macdonald}.  Let
$\Gamma=\QQ[p_1,p_3,p_5,\ldots]$ be the ring of symmetric functions
generated by the odd power sums.  We write $\SP$ for the set of strict
partitions, including the empty partition.

Unless analytic convergence is imposed, all generating series below are
understood in the degree completion of the odd power-sum ring, or in the
corresponding completed tensor product in the $X$- and $Y$-variables.

For an alphabet or specialization $A$, define
\begin{equation}\label{eq:FA}
        F_A(z)
        =\prod_{a\in A}\frac{1+az}{1-az}
        =\exp\left(
        2\sum_{\substack{n\geq1\\n\text{ odd}}}
        \frac{p_n(A)}{n}z^n
        \right)
        =\sum_{r\geq0}q_r(A)z^r.
\end{equation}
The Schur $Q$-function $Q_\lambda(A)$ may be defined by the generating
series
\begin{equation}\label{eq:Q-generating}
        \prod_{i=1}^m F_A(z_i)
        \prod_{1\leq i<j\leq m}\frac{z_i-z_j}{z_i+z_j}
        =\sum_{\lambda\in\SP,\, \ell(\lambda)\leq m}
        Q_\lambda(A)z_1^{\lambda_1}\cdots z_m^{\lambda_m}
\end{equation}
whenever $m$ is large enough.  Equivalently, for $r>s\geq0$,
\begin{equation}\label{eq:two-row-classical}
        Q_{(r,s)}(A)
        =q_r(A)q_s(A)
        +2\sum_{i=1}^s(-1)^i q_{r+i}(A)q_{s-i}(A),
\end{equation}
with $Q_{(r,s)}=-Q_{(s,r)}$ when $r<s$ and $Q_{(r,r)}=0$, and the
Pfaffian Giambelli formula gives
\begin{equation}\label{eq:pfaffian-giambelli}
        Q_\lambda(A)=\Pf\left(Q_{(\lambda_i,\lambda_j)}(A)\right).
\end{equation}
If $\ell(\lambda)$ is odd, we append a zero part before taking the
Pfaffian.  We use the standard convention
\begin{equation}\label{eq:P-Q-relation}
        P_\lambda(A)=2^{-\ell(\lambda)}Q_\lambda(A).
\end{equation}
The Cauchy identity is
\begin{equation}\label{eq:Cauchy-QP}
        \sum_{\lambda\in\SP}Q_\lambda(A)P_\lambda(B)
        =\Ht(A;B),
\end{equation}
where
\begin{equation}\label{eq:HAB}
        \Ht(A;B)
        =\exp\left(
        2\sum_{\substack{n\geq1\\ n\text{ odd}}}
        \frac{p_n(A)p_n(B)}{n}\right).
\end{equation}
For finite alphabets,
\begin{equation}\label{eq:HAB-product}
        \Ht(A;B)=\prod_{a\in A,\, b\in B}\frac{1+ab}{1-ab}.
\end{equation}

We use plethystic notation throughout.  Thus $A+B$ denotes the
specialization determined by
\[
        p_n[A+B]=p_n[A]+p_n[B],
\]
and $cA$ denotes the specialization with
\[
        p_n[cA]=c^np_n[A].
\]
In particular,
\begin{equation}\label{eq:Xminus-tX}
        p_n[X-tX]=(1-t^n)p_n(X).
\end{equation}

\section{The modified odd operator}

Let
\[
        \Gamma_t=\QQ(t)[p_1,p_3,p_5,\ldots].
\]
Define the diagonal operator
\begin{equation}\label{eq:Rt-definition}
        \Rt(p_n)=(1-t^n)p_n,\qquad n\geq1\text{ odd},
\end{equation}
and extend it multiplicatively and linearly to $\Gamma_t$.  Its inverse
over $\QQ(t)$ is given by
\[
        \Rt^{-1}(p_n)=\frac{p_n}{1-t^n},\qquad n\geq1\text{ odd}.
\]

Let $\Phi(z)$ denote the classical Schur $Q$ vertex operator on the odd
power-sum ring,
\begin{equation}\label{eq:classical-Q-vertex}
        \Phi(z)
        =\exp\left(
        2\sum_{\substack{n\geq1\\ n\text{ odd}}}
        \frac{p_n}{n}z^n\right)
        \exp\left(
        -\sum_{\substack{n\geq1\\ n\text{ odd}}}
        \frac{\partial}{\partial p_n}z^{-n}\right).
\end{equation}
The modified Greaves--Jing--Zhu operator in the odd sector is
\begin{equation}\label{eq:modified-operator}
        \Yop_t(z)
        =\exp\left(
        2\sum_{\substack{n\geq1\\ n\text{ odd}}}
        \frac{(1-t^n)p_n}{n}z^n\right)
        \exp\left(
        -\sum_{\substack{n\geq1\\ n\text{ odd}}}
        \frac{1}{1-t^n}\frac{\partial}{\partial p_n}z^{-n}\right).
\end{equation}

\begin{proposition}[Conjugation form]\label{prop:conjugation}
The modified operator satisfies
\begin{equation}\label{eq:Yt-conjugation}
        \Yop_t(z)=\Rt\Phi(z)\Rt^{-1}.
\end{equation}
Consequently, the shifted $t$-Schur functions generated by the modified
operator are
\begin{equation}\label{eq:Qt-plethystic}
        \cQ_\lambda(X;t)=\Rt Q_\lambda(X)=Q_\lambda[X-tX].
\end{equation}
\end{proposition}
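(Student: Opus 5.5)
The plan is to verify the conjugation identity factor by factor, using that $\Rt$ is a ring automorphism of $\Gamma_t$ acting diagonally on the odd power sums. First I will check two commutation rules. For multiplication operators, $\Rt\, m_f\, \Rt^{-1}=m_{\Rt f}$ for every $f$, because $\Rt$ is multiplicative. For derivations, I will compute $\Rt\,\partial_{p_n}\,\Rt^{-1}$ on a monomial $p_{n_1}\cdots p_{n_k}$. The map $\Rt^{-1}$ multiplies the monomial by $\prod_i (1-t^{n_i})^{-1}$. Differentiating in $p_n$ removes one factor $p_n$. Applying $\Rt$ then restores every factor except the removed one. The net effect is
\[
        \Rt\,\frac{\partial}{\partial p_n}\,\Rt^{-1}=\frac{1}{1-t^n}\frac{\partial}{\partial p_n}.
\]

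Next, conjugation by $\Rt$ is an algebra automorphism of operators, so it commutes with exponentials of locally nilpotent operators; the completion is degreewise, so each coefficient of $z^k$ is a finite sum. Applying the rules above to the two exponential factors of $\Phi(z)$ in \eqref{eq:classical-Q-vertex} gives
\[
        \Rt\Phi(z)\Rt^{-1}=\Rt e^{A_+}\Rt^{-1}\cdot\Rt e^{A_-}\Rt^{-1}=e^{\Rt A_+\Rt^{-1}}e^{\Rt A_-\Rt^{-1}} .
\]
The rules turn $p_n$ into $(1-t^n)p_n$ in the first exponent and $\partial_{p_n}$ into $(1-t^n)^{-1}\partial_{p_n}$ in the second. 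This is exactly \eqref{eq:modified-operator}, which proves \eqref{eq:Yt-conjugation}.

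For the consequence, I take $\cQ_\lambda(X;t)$ to be the coefficient of $z_1^{\lambda_1}\cdots z_m^{\lambda_m}$ in $\Yop_t(z_1)\cdots\Yop_t(z_m)\cdot 1$. The analogous coefficient for $\Phi$ is classically $Q_\lambda$: the normal-ordering of the $\Phi$'s yields the factor $\prod_{i<j}\frac{z_i-z_j}{z_i+z_j}$, and together with \eqref{eq:Q-generating} this identifies the coefficient. Since $\Rt^{-1}(1)=1$, the product of $\Yop_t$'s telescopes:
\[
        \Yop_t(z_1)\cdots\Yop_t(z_m)\,1=\Rt\,\Phi(z_1)\cdots\Phi(z_m)\,1 .
\]
Extracting coefficients gives $\cQ_\lambda=\Rt Q_\lambda$. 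Finally, $\Rt$ is the specialization $p_n\mapsto p_n[X-tX]$ by \eqref{eq:Xminus-tX}, and $Q_\lambda$ lies in $\Gamma$, so $\Rt Q_\lambda(X)=Q_\lambda[X-tX]$.

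The main obstacle is the definitional point of which coefficients or normalization define $\cQ_\lambda$ in \cite{LeeGessel}. If that paper extracts coefficients differently, for instance with $z^{-\lambda}$ or via an adjoint, the telescoping argument is unchanged, since only $\Rt^{-1}1=1$ and linearity of coefficient extraction are used.
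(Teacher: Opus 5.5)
Your proof is correct and follows the paper's argument step for step. You conjugate $p_n$ and $\partial/\partial p_n$ by $\Rt$, substitute into the two exponential factors of $\Phi(z)$, telescope the product using $\Rt^{-1}\vac=\vac$, and identify $\Rt$ with the substitution $X\mapsto X-tX$. Your monomial check of $\Rt\,\partial_{p_n}\Rt^{-1}$ and your coefficientwise justification for conjugating the exponentials spell out details the paper leaves implicit.
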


\begin{proof}
From the definition of $\Rt$,
\[
        \Rt p_n\Rt^{-1}=(1-t^n)p_n.
\]
Also, since $\Rt$ rescales $p_n$ by $1-t^n$,
\[
        \Rt\frac{\partial}{\partial p_n}\Rt^{-1}
        =\frac{1}{1-t^n}\frac{\partial}{\partial p_n}.
\]
Substituting these two relations into \eqref{eq:classical-Q-vertex}
gives \eqref{eq:Yt-conjugation}.

Since $\Rt^{-1}\vac=\vac$, for every $m\geq1$ we have
\[
        \Yop_t(z_1)\cdots \Yop_t(z_m)\vac
        =
        \Rt\Phi(z_1)\cdots\Phi(z_m)\vac .
\]
Taking the coefficient corresponding to a strict partition $\lambda$ in
the same convention as the classical Schur $Q$ vertex construction gives
\[
        \cQ_\lambda(X;t)=\Rt Q_\lambda(X).
\]
The operator $\Rt$ rescales each odd power sum by
$p_n\mapsto (1-t^n)p_n$, which is exactly the plethystic substitution
$X\mapsto X-tX$.  Hence
\[
        \Rt Q_\lambda(X)=Q_\lambda[X-tX].
\]
\end{proof}

\begin{remark}\label{rem:same-structure-constants}
The operator $\Rt$ is a graded Hopf algebra automorphism over
$\QQ(t)$.  Hence the multiplicative structure constants of the basis
$\{\cQ_\lambda(X;t)\}$ are the same as those of the classical Schur
$Q$-basis after the diagonal change of variables.  The $t$-dependence is
encoded by the plethystic specialization of the alphabets and by the
corresponding shifted Schur weight.
\end{remark}

\section{The shifted \texorpdfstring{$t$}{t}-Schur weight}

\begin{definition}\label{def:shifted-t-weight}
For specializations $X$ and $Y$, define the shifted $t$-Schur weight on
strict partitions by
\begin{equation}\label{eq:shifted-t-weight}
        W_t^{X,Y}(\lambda)=\cQ_\lambda(X;t)P_\lambda(Y)
        =Q_\lambda[X-tX]P_\lambda(Y).
\end{equation}
Its normalizing series is
\begin{equation}\label{eq:Zt-definition}
        Z_t(X,Y)=\sum_{\lambda\in\SP}W_t^{X,Y}(\lambda).
\end{equation}
Whenever $Z_t(X,Y)$ is invertible as a formal series, we write
\begin{equation}\label{eq:formal-measure}
        \mathbb{P}_t^{X,Y}(\lambda)=\frac{W_t^{X,Y}(\lambda)}{Z_t(X,Y)}.
\end{equation}
For general $t$ this is a formal normalized weight.  In the positive
specialization described in \cref{cor:positive-specialization}, it is a
probability measure.
\end{definition}

\begin{theorem}[Normalization]\label{thm:normalization}
The normalizing series is
\begin{equation}\label{eq:Zt-H}
        Z_t(X,Y)=\Ht(X-tX;Y).
\end{equation}
Equivalently,
\begin{equation}\label{eq:Zt-exponential}
        Z_t(X,Y)
        =\exp\left(
        2\sum_{\substack{n\geq1\\n\text{ odd}}}
        \frac{(1-t^n)p_n(X)p_n(Y)}{n}\right).
\end{equation}
If $X=(x_1,x_2,\ldots)$ and $Y=(y_1,y_2,\ldots)$ are finite or
absolutely convergent alphabets, then
\begin{equation}\label{eq:Zt-product-main}
        Z_t(X,Y)
        =\prod_{i,j}
        \frac{(1+x_i y_j)(1-tx_i y_j)}{(1-x_i y_j)(1+tx_i y_j)}.
\end{equation}
\end{theorem}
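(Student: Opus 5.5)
The plan is to push the Cauchy identity \eqref{eq:Cauchy-QP} through the operator $\Rt$ acting on the $X$-variables. By \cref{prop:conjugation}, $W_t^{X,Y}(\lambda)=(\Rt\otimes\mathrm{id})\bigl(Q_\lambda(X)P_\lambda(Y)\bigr)$, where $\Rt$ acts on the $X$-factor of the completed tensor product $\Gamma_t\widehat\otimes\Gamma_t$. The operator $\Rt$ is a ring homomorphism on $\Gamma_t$ that preserves degree. Hence $\Rt\otimes\mathrm{id}$ extends to the degree completion, and it commutes with the infinite sum over $\SP$, since each homogeneous component involves only finitely many $\lambda$. Applying it to \eqref{eq:Cauchy-QP} gives
\[
Z_t(X,Y)=\sum_{\lambda\in\SP}(\Rt\otimes\mathrm{id})\bigl(Q_\lambda(X)P_\lambda(Y)\bigr)=(\Rt\otimes\mathrm{id})\,\Ht(X;Y).
\]

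Next I would evaluate the right-hand side using the exponential form \eqref{eq:HAB}. Because $\Rt$ is multiplicative and continuous in the degree topology, it commutes with $\exp$ of a series with no constant term. It sends $p_n(X)$ to $(1-t^n)p_n(X)$, which equals $p_n[X-tX]$ by \eqref{eq:Xminus-tX}. This yields both $\Ht(X-tX;Y)$, which is \eqref{eq:Zt-H}, and the exponential expression \eqref{eq:Zt-exponential} at once.

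For the product formula \eqref{eq:Zt-product-main}, I would split the exponent as
\[
2\sum_{n \text{ odd}}\frac{p_n(X)p_n(Y)}{n}\;-\;2\sum_{n \text{ odd}}\frac{p_n(tX)p_n(Y)}{n},
\]
so that $Z_t=\Ht(X;Y)\,\Ht(tX;Y)^{-1}$. For finite alphabets, \eqref{eq:HAB-product} applies to each factor. For $tX$ this uses $p_n[tX]=t^np_n(X)=p_n(tx_1,tx_2,\ldots)$, so $tX$ is the genuine alphabet $\{tx_i\}$ and
\[
\Ht(tX;Y)=\prod_{i,j}\frac{1+tx_iy_j}{1-tx_iy_j}.
\]
Inverting this factor gives the stated product.

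The only delicate point is justification rather than algebra. For formal series, exchanging $\Rt$ with the sum and with $\exp$ is the graded-continuity argument above. For absolutely convergent alphabets, I would verify the identity termwise through $\log\frac{1+u}{1-u}=2\sum_{n\text{ odd}}u^n/n$, valid for $|x_iy_j|<1$ and $|tx_iy_j|<1$. Absolute convergence of $\sum_{i,j}|x_iy_j|$ then permits rearranging the double sum, which gives the product formula.
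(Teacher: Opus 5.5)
Your proposal is correct and follows essentially the same route as the paper. Both apply the Cauchy identity \eqref{eq:Cauchy-QP} at the specialization $X-tX$, which you make explicit as applying the degree-preserving ring map $\Rt\otimes\mathrm{id}$ termwise; both read off \eqref{eq:Zt-exponential} from $p_n[X-tX]=(1-t^n)p_n(X)$; and both split $\Ht(X-tX;Y)$ into two finite products. The only cosmetic difference is that the paper writes the second factor as $\Ht(-tX;Y)$ rather than $\Ht(tX;Y)^{-1}$; these agree because $p_n[-tX]=-t^np_n(X)$ for odd $n$. Your continuity argument and your treatment of the absolutely convergent case supply justification that the paper leaves implicit.
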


\begin{proof}
By \cref{prop:conjugation} and the Schur $Q/P$ Cauchy identity,
\[
        Z_t(X,Y)
        =\sum_{\lambda\in\SP}Q_\lambda[X-tX]P_\lambda(Y)
        =\Ht(X-tX;Y).
\]
The exponential expression follows from the definition of $\Ht$ and
$p_n[X-tX]=(1-t^n)p_n(X)$.  For finite alphabets,
\[
        \Ht(X-tX;Y)=\Ht(X;Y)\Ht(-tX;Y).
\]
Using \eqref{eq:HAB-product} and
\[
        \Ht(-tX;Y)=\prod_{i,j}\frac{1-tx_i y_j}{1+tx_i y_j}
\]
gives \eqref{eq:Zt-product-main}.
\end{proof}

\begin{corollary}[Positive specialization]\label{cor:positive-specialization}
Let $t=-q$ with $q\geq0$.  If $X$ and $Y$ are nonnegative finite
alphabets satisfying
\begin{equation}\label{eq:convergence-positive}
        \max\{1,q\}x_i y_j<1
        \qquad\text{for all }i,j,
\end{equation}
then
\begin{equation}\label{eq:positive-measure}
        \mathbb{P}_{-q}^{X,Y}(\lambda)
        =\frac{Q_\lambda[X+qX]P_\lambda(Y)}{\Ht(X+qX;Y)}
\end{equation}
is a probability measure on $\SP$.
\end{corollary}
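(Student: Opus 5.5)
We need three things for $\mathbb{P}_{-q}^{X,Y}$: each weight is nonnegative, the normalizing series converges to a finite positive number, and the normalized weights sum to $1$. The plan is to treat $X+qX$ as a genuine nonnegative alphabet and then invoke the classical Schur $Q$ theory.

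\textbf{Step 1 (the weight is a genuine specialization).} With $t=-q$, \eqref{eq:Xminus-tX} gives $p_n[X-tX]=(1+q^n)p_n(X)=p_n[X+qX]$ for odd $n$. This is exactly the power-sum specialization of the concatenated finite alphabet
\[
        X\cup qX=(x_1,x_2,\ldots,qx_1,qx_2,\ldots).
\]
Since $\Gamma$ is generated by the odd power sums, \cref{prop:conjugation} yields $Q_\lambda[X+qX]=Q_\lambda(X\cup qX)$. That is, the weight is the ordinary Schur $Q$-function evaluated at a finite alphabet of nonnegative reals.

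\textbf{Step 2 (nonnegativity).} Schur $Q$- and $P$-functions have nonnegative integer coefficients in the monomial basis. This follows from the combinatorial formula via marked shifted tableaux; see \cite[Chapter III, Section 8]{Macdonald}. Evaluating at nonnegative variables therefore gives $Q_\lambda(X\cup qX)\ge0$ and $P_\lambda(Y)\ge0$. So every $W_{-q}^{X,Y}(\lambda)\ge0$, and it vanishes when $\ell(\lambda)$ exceeds the alphabet length.

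\textbf{Step 3 (convergence and normalization).} By \cref{thm:normalization}, the formal identity $Z_{-q}(X,Y)=\Ht(X+qX;Y)$ holds. We need it as a numerical identity. The product form is
\[
        \prod_{i,j}\frac{(1+x_iy_j)(1+qx_iy_j)}{(1-x_iy_j)(1-qx_iy_j)}.
\]
Condition \eqref{eq:convergence-positive} says every product $a b$ with $a\in X\cup qX$ and $b\in Y$ satisfies $0\le ab<1$. Hence each factor $\frac{1+ab}{1-ab}$ is finite and positive, and the whole product is finite and at least $1$.

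To pass from formal to numerical equality, expand $\Ht(X\cup qX;Y)$ as a power series in the finitely many variables. By Step 2 the Cauchy identity \eqref{eq:Cauchy-QP} expresses it as a sum of nonnegative terms. Grouping monomials by total degree, the series $\sum_\lambda Q_\lambda P_\lambda$ is a rearrangement of the nonnegative monomial expansion of the product. That expansion converges because each factor's series $1+2\sum_{k\ge1}(ab)^k$ converges absolutely when $0\le ab<1$. By Tonelli's theorem (rearrangement of nonnegative series), the numerical sum equals the product. Dividing by this finite positive constant gives total mass $1$.

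The main obstacle is Step 3: justifying that the formal Cauchy identity holds numerically. Positivity of the terms from Step 2 is what makes the rearrangement argument work.
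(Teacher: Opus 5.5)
Your proposal is correct and follows the paper's proof. You identify $X-tX$ at $t=-q$ with the genuine nonnegative alphabet $X\cup qX$, use the positivity of $Q_\lambda$ and $P_\lambda$ on nonnegative variables, and note that $\max\{1,q\}x_iy_j<1$ makes the Cauchy product finite. Your Tonelli rearrangement argument spells out the passage from the formal Cauchy identity to the numerical equality $\sum_\lambda Q_\lambda[X+qX]P_\lambda(Y)=\Ht(X+qX;Y)$, which the paper leaves implicit.
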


\begin{proof}
For odd $n$ we have $1-(-q)^n=1+q^n$, so $X-tX=X+qX$ at $t=-q$.
This is the sum of two nonnegative specializations, $X$ and $qX$.
Schur $Q$- and $P$-functions are nonnegative on nonnegative
specializations, and \eqref{eq:convergence-positive} guarantees that the
Cauchy product is finite.
\end{proof}

\section{Pfaffian correlations}

We identify strict partitions with finite subsets of $\ZZ_{>0}$:
\[
        \lambda=(\lambda_1>\lambda_2>\cdots>\lambda_\ell>0)
        \quad\longleftrightarrow\quad
        \{\lambda_1,\ldots,\lambda_\ell\}.
\]
For a finite subset $S\subset\ZZ_{>0}$, define the correlation function
\begin{equation}\label{eq:correlation-function}
        \rho_t(S)=\mathbb{P}_t^{X,Y}\{\lambda\in\SP:S\subseteq\lambda\}.
\end{equation}
This is a formal correlation function unless the weight is positive.

\begin{definition}\label{def:Matsumoto-kernel}
Set
\begin{equation}\label{eq:Jt-definition}
        J_t(z;X,Y)=F_{X-tX}(z)F_Y(-z^{-1}).
\end{equation}
For $u,v\in\ZZ\setminus\{0\}$, let $K_t(u,v)$ be $\varepsilon(u,v)$
times the coefficient of $z^u w^v$ in the formal Laurent expansion of
\begin{equation}\label{eq:kernel-generating}
        \frac12 J_t(z;X,Y)J_t(w;X,Y)\frac{z-w}{z+w}.
\end{equation}
The expansion convention is the following: $F_{X-tX}(z)$ is expanded in
nonnegative powers of $z$, $F_Y(-z^{-1})$ is expanded in nonpositive
powers of $z$, and
\begin{equation}\label{eq:kernel-expansion-convention}
        \iota_{z,w}\frac{z-w}{z+w}
        =
        \frac{1-w/z}{1+w/z}
        =
        1+2\sum_{m\geq1}(-1)^m\left(\frac{w}{z}\right)^m.
\end{equation}
Here $\iota_{z,w}$ denotes expansion in the region $|w|<|z|$.

The sign factor is
\begin{equation}\label{eq:epsilon}
        \varepsilon(u,v)=
        \begin{cases}
        1, & u>0,\ v>0,\\
        (-1)^v, & u>0,\ v<0,\\
        (-1)^{u+v}, & u<0,\ v<0.
        \end{cases}
\end{equation}
The remaining values are determined by skew-symmetry,
$K_t(v,u)=-K_t(u,v)$.
\end{definition}

\begin{theorem}[Pfaffian correlation formula]\label{thm:pfaffian-correlation}
Let $S=\{k_1,\ldots,k_N\}\subset\ZZ_{>0}$.  Define the $2N\times2N$
skew-symmetric matrix $M_t(S)$ by specifying its entries above the
diagonal:
\begin{equation}\label{eq:M-matrix}
        M_t(S)_{ij}=
        \begin{cases}
        K_t(k_i,k_j), & 1\leq i<j\leq N,\\
        K_t(k_i,-k_{2N-j+1}), & 1\leq i\leq N<j\leq2N,\\
        K_t(-k_{2N-i+1},-k_{2N-j+1}), & N<i<j\leq2N.
        \end{cases}
\end{equation}
Then
\begin{equation}\label{eq:pfaffian-correlation}
        \rho_t(S)=\Pf M_t(S).
\end{equation}
\end{theorem}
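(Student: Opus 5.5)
The plan is to reduce the statement to Matsumoto's Pfaffian correlation theorem for the shifted Schur measure, which is stated for an arbitrary pair of specializations $(A,B)$. By \cref{prop:conjugation}, the weight in \cref{def:shifted-t-weight} is
\[
W_t^{X,Y}(\lambda)=Q_\lambda[A]P_\lambda(B),\qquad A=X-tX,\ B=Y.
\]
By \cref{thm:normalization}, the partition function is $\Ht(A;B)$. So $\mathbb{P}_t^{X,Y}$ is literally the shifted Schur measure with specializations $(A,B)$. Matsumoto's theorem then says that $\rho(S)=\Pf M(S)$, where $M(S)$ is built from the kernel whose generating function is $\frac12 J(z)J(w)\frac{z-w}{z+w}$ with $J(z)=F_A(z)F_B(-z^{-1})$. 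Substituting $A=X-tX$ gives exactly $J_t$ of \eqref{eq:Jt-definition}. Thus the content of the proof is checking that the identity is valid for a virtual alphabet, in the formal setting.

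First, I would observe that Matsumoto's derivation uses only the following ingredients:
\begin{itemize}
\item the Cauchy identity;
\item the vertex operator realization of $Q_\lambda$, via \eqref{eq:Q-generating} or the neutral fermion Fock space;
\item Wick's theorem for neutral fermions.
\end{itemize}
All of these are identities in the ring generated by the odd power sums $p_n(A)$, $p_n(B)$, with $p_n(A)$ treated as free variables. Concretely, $\rho(S)\cdot\Ht(A;B)$ equals $\sum_{\lambda\supseteq S}Q_\lambda(A)P_\lambda(B)$, an element of the completed tensor product $\Gamma\widehat\otimes\Gamma$. The entries $K(u,v)$ are also elements of this completion: each coefficient of $z^uw^v$ is a finite sum in each bounded degree, because the expansion convention \eqref{eq:kernel-expansion-convention} pairs nonnegative powers from $F_A$ with nonpositive powers from $F_B$. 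Hence the identity $\rho=\Pf M$ holds as a polynomial identity in the free variables $p_n(A),p_n(B)$, degree by degree. Specializing $p_n(A)\mapsto(1-t^n)p_n(X)$ is a ring homomorphism $\Gamma\to\Gamma_t$ that preserves degree. It is applied via $\Rt$, which by \cref{rem:same-structure-constants} is a graded Hopf automorphism. It therefore commutes with Pfaffians and with extracting coefficients of $z^uw^v$, and it maps $\Ht(A;B)$ to $Z_t$, which is invertible since it has constant term $1$.

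Second, I would match conventions. Matsumoto's kernel is typically written with the pair $(u,v)$ ranging over signed indices, with signs $\varepsilon(u,v)$ and the ordering $k_1,\dots,k_N,-k_N,\dots,-k_1$. I would verify that \eqref{eq:epsilon} and \eqref{eq:M-matrix} reproduce his matrix exactly, including the factor $\frac12$ arising from $P=2^{-\ell}Q$. One way to do this is to check the case $N=1$ directly. There $\rho(\{k\})=K(k,-k)$, which should equal the one-point function obtained from the fermionic expression $\langle\phi_k\phi_{-k}\rangle$ conjugated by the vertex operators $\Gamma_\pm$.

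The main obstacle is this convention matching. The substantive Pfaffian structure is inherited, but the signs $(-1)^v$ and the reversed ordering must agree with the source. If a direct citation proves ambiguous, I would rederive the formula myself, following these steps:
\begin{enumerate}
\item Write $\rho(S)=Z^{-1}\langle \Gamma_+(B)\,\prod_i\phi_{k_i}\phi_{k_i}^*\,\Gamma_-(A)\rangle$ with neutral fermions $\phi_n$, where $\phi_{k}\phi_k^*$ is realized as $(-1)^k 2\phi_k\phi_{-k}$.
\item Commute $\Gamma_\pm$ through, which multiplies the fermion generating series $\phi(z)$ by $J(z)$.
\item Apply Wick's theorem, using the two-point function $\langle\phi(z)\phi(w)\rangle=\frac12\frac{z-w}{z+w}$ in the region $|w|<|z|$.
\end{enumerate}
This yields the Pfaffian with exactly the stated $\varepsilon$ signs, and it holds formally for any specialization $A$, in particular $A=X-tX$.
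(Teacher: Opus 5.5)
Your proposal is correct and follows the same route as the paper: identify $W_t^{X,Y}$ as the shifted Schur weight with virtual first specialization $A=X-tX$ (via \cref{prop:conjugation} and \cref{thm:normalization}), then invoke Matsumoto's Pfaffian formula with symbol $F_A(z)F_Y(-z^{-1})=J_t(z)$. Your extra step---proving the identity in the degree-completed ring with $p_n(A),p_n(B)$ free, then pushing it through the degree-preserving homomorphism $p_n(A)\mapsto(1-t^n)p_n(X)$---spells out the formal validity for a virtual alphabet, which the paper asserts in one line.
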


\begin{proof}
Matsumoto's Pfaffian formula for the shifted Schur measure applies to
any pair of specializations $(A,Y)$ for which the Cauchy series is well
defined, and also as a formal identity \cite{Matsumoto}.  We apply that
formula with the first specialization equal to the virtual specialization
\[
        A=X-tX.
\]
The shifted Schur measure with parameters $(A,Y)$ has weight
$Q_\lambda(A)P_\lambda(Y)$.  By \cref{prop:conjugation}, this is exactly
$\cQ_\lambda(X;t)P_\lambda(Y)$.  The symbol in Matsumoto's theorem is
$F_A(z)F_Y(-z^{-1})$, which becomes \eqref{eq:Jt-definition}.  This gives
\eqref{eq:pfaffian-correlation}.
\end{proof}

\begin{remark}\label{rem:kernel-signs}
The sign convention in \cref{def:Matsumoto-kernel} is the one used for
the shifted Schur measure.  It is often convenient in process formulas
to use Vuleti\'c's time-dependent convention \cite{Vuletic}.  The
one-time and process symbols are equivalent after the standard change of
variables and signs, but the conventions should be kept separate in
explicit formulas.
\end{remark}

\section{Largest part and size statistics}

\subsection{Fredholm Pfaffian for the largest part}

Let $h\geq0$.  The event $\{\lambda_1\leq h\}$ is the event that the
point configuration has no point in $\{h+1,h+2,\ldots\}$.  The Pfaffian
correlation formula therefore gives the usual Fredholm Pfaffian
expansion
\begin{equation}\label{eq:fredholm-pfaffian-largest}
        \mathbb{P}_t^{X,Y}(\lambda_1\leq h)
        =\Pf\bigl(\mathbf J-\mathbf K_t\bigr)_{\ell^2(\{h+1,h+2,\ldots\})},
\end{equation}
where the $2\times2$ block kernel is
\begin{equation}\label{eq:block-kernel}
        \mathbf K_t(r,s)
        =
        \begin{pmatrix}
        K_t(r,s) & K_t(r,-s)\\
        K_t(-r,s) & K_t(-r,-s)
        \end{pmatrix},
        \qquad r,s>h,
\end{equation}
and $\mathbf J(r,s)=\delta_{rs}
\begin{psmallmatrix}0&1\\-1&0\end{psmallmatrix}$ is the standard skew
identity kernel.  In expanded form,
\begin{equation}\label{eq:fredholm-expanded}
        \mathbb{P}_t^{X,Y}(\lambda_1\leq h)
        =
        1+\sum_{m\geq1}(-1)^m
        \sum_{h<r_1<\cdots<r_m}
        \rho_t(\{r_1,\ldots,r_m\}).
\end{equation}
For positive specializations the series converges under the usual
trace-class assumptions; otherwise it is read as a formal identity.  This
is the shifted Schur-measure Fredholm Pfaffian specialized to
$X-tX$; compare \cite{TW,Matsumoto,Vuletic}.

\subsection{Size generating function}

\begin{theorem}[Size cumulants]\label{thm:size-cumulants}
Let $|\lambda|$ denote the size of a strict partition distributed
according to the formal normalized weight \eqref{eq:formal-measure}. Then
\begin{equation}\label{eq:size-pgf}
        \EE_t[u^{|\lambda|}]
        =\frac{Z_t(uX,Y)}{Z_t(X,Y)}.
\end{equation}
Equivalently,
\begin{equation}\label{eq:size-log-mgf}
        \log\EE_t[e^{s|\lambda|}]
        =2\sum_{\substack{n\geq1\\n\text{ odd}}}
        \frac{(e^{ns}-1)(1-t^n)p_n(X)p_n(Y)}{n}.
\end{equation}
Hence the $m$th cumulant of $|\lambda|$ is
\begin{equation}\label{eq:size-cumulants-main}
        \kappa_m(|\lambda|)
        =2\sum_{\substack{n\geq1\\n\text{ odd}}}
        n^{m-1}(1-t^n)p_n(X)p_n(Y).
\end{equation}
In particular,
\begin{equation}\label{eq:size-mean-variance}
        \EE_t|\lambda|
        =
        2\sum_{\substack{n\geq1\\n\text{ odd}}}
        (1-t^n)p_n(X)p_n(Y),
\end{equation}
\begin{equation}\label{eq:size-variance}
        \Var_t(|\lambda|)
        =
        2\sum_{\substack{n\geq1\\n\text{ odd}}}
        n(1-t^n)p_n(X)p_n(Y).
\end{equation}
\end{theorem}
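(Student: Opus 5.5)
The plan is to use the homogeneity of Schur $Q$-functions together with \cref{thm:normalization}. First I record that $Q_\lambda[X-tX]$ is homogeneous of degree $|\lambda|$ in the odd power sums of $X$. Indeed, $\Rt$ preserves degree, and $Q_\lambda$ is homogeneous of degree $|\lambda|$. So replacing $X$ by $uX$, which sends $p_n(X)\mapsto u^np_n(X)$ and hence $p_n[X-tX]\mapsto u^n(1-t^n)p_n(X)$, gives $Q_\lambda[uX-tuX]=u^{|\lambda|}Q_\lambda[X-tX]$. Then
\[
        \sum_{\lambda\in\SP}u^{|\lambda|}W_t^{X,Y}(\lambda)
        =\sum_{\lambda\in\SP}W_t^{uX,Y}(\lambda)=Z_t(uX,Y).
\]
Dividing by $Z_t(X,Y)$, which is invertible since its constant term is $1$, gives \eqref{eq:size-pgf}. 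This is an identity in the completed ring $\QQ(t)[[p_1(X),p_3(X),\dots]]\hat\otimes(\cdots)$, with $u$ a formal variable. It is a polynomial identity in $u$ in each fixed degree, so the grading keeps everything well defined.

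Next I substitute the exponential form \eqref{eq:Zt-exponential} for $Z_t(uX,Y)$ and for $Z_t(X,Y)$. The ratio is
\[
        \exp\Bigl(2\sum_{n\text{ odd}}\frac{(u^n-1)(1-t^n)p_n(X)p_n(Y)}{n}\Bigr).
\]
Setting $u=e^s$ and taking the logarithm gives \eqref{eq:size-log-mgf}. The only point that needs a sentence of justification is the substitution $u=e^s$ in the formal setting. Each graded piece of degree $n$ in $X$ carries the factor $u^n$, so replacing $u^n$ by $e^{ns}=\sum_k n^ks^k/k!$ is legitimate degree by degree. The result lives in the ring of formal power series in $s$ with coefficients in the completed ring, and $\log$ is defined there because the constant term in $s$ is $1$.

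Finally I expand $e^{ns}-1=\sum_{m\geq1}n^ms^m/m!$ and interchange the sums, which is fine because each coefficient of $s^m$ is a convergent series in the degree completion. The coefficient of $s^m/m!$ is then
\[
        2\sum_{n\text{ odd}}n^{m-1}(1-t^n)p_n(X)p_n(Y),
\]
which is $\kappa_m$ by the definition of cumulants as the coefficients of the log moment generating function. This proves \eqref{eq:size-cumulants-main}. Taking $m=1$ and $m=2$ gives the mean and the variance.

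The main obstacle is not computational. It is interpreting ``cumulants'' for a formal, possibly non-positive weight. I would define the moments $\EE_t[|\lambda|^k]$ as the formal sums $\sum_\lambda|\lambda|^kW_t^{X,Y}(\lambda)/Z_t$, obtained by applying $(u\,d/du)^k$ at $u=1$. I would then check that these agree with the coefficients of the $s$-expansion above. For positive specializations in the regime of \cref{cor:positive-specialization}, the analytic convergence of the product \eqref{eq:Zt-product-main} near $u=1$ makes the identity literal.
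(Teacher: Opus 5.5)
Your proposal is correct and follows the paper's route. Homogeneity gives $Q_\lambda[uX-tuX]=u^{|\lambda|}Q_\lambda[X-tX]$, summing over $\lambda$ yields $Z_t(uX,Y)/Z_t(X,Y)$, and substituting the exponential form \eqref{eq:Zt-exponential} with $u=e^s$ and reading off coefficients in $s$ gives the log moment generating function and the cumulants. Your extra points are sound refinements the paper leaves implicit: the invertibility of $Z_t(X,Y)$, the degree-by-degree justification of $u=e^s$, and the meaning of cumulants for a non-positive weight.
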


\begin{proof}
The Schur $Q$-function $Q_\lambda$ is homogeneous of degree $|\lambda|$.
Thus multiplying the first alphabet $X$ by $u$ multiplies the weight of
$\lambda$ by $u^{|\lambda|}$:
\[
        u^{|\lambda|}Q_\lambda[X-tX]
        =Q_\lambda[uX-tuX].
\]
Therefore
\[
        \EE_t[u^{|\lambda|}]
        =\frac{\sum_\lambda Q_\lambda[uX-tuX]P_\lambda(Y)}{Z_t(X,Y)}
        =\frac{Z_t(uX,Y)}{Z_t(X,Y)}.
\]
Substituting \eqref{eq:Zt-exponential} and then setting $u=e^s$ gives
\eqref{eq:size-log-mgf}.  The cumulants are obtained by applying
$\partial_s^m$ at $s=0$.
\end{proof}

\begin{corollary}\label{cor:size-negative-q}
At $t=-q$ with $q\geq0$,
\begin{equation}\label{eq:size-cumulants-negative-q}
        \kappa_m(|\lambda|)
        =
        2\sum_{\substack{n\geq1\\n\text{ odd}}}
        n^{m-1}(1+q^n)p_n(X)p_n(Y).
\end{equation}
\end{corollary}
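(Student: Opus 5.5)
The corollary is a direct specialization of \cref{thm:size-cumulants}, so the plan is to substitute $t=-q$ into \eqref{eq:size-cumulants-main} and simplify the factor $1-t^n$. The sum in \eqref{eq:size-cumulants-main} runs only over odd $n$. For odd $n$ we have $(-q)^n=-q^n$, so
\[
        1-t^n=1-(-q)^n=1+q^n .
\]
This is the same computation already used in the proof of \cref{cor:positive-specialization}.

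The only point needing care is what is meant by $\kappa_m$. In \cref{thm:size-cumulants} the cumulants are formal: they are defined through the logarithm of the normalized generating series \eqref{eq:size-log-mgf}. Specializing $t=-q$ commutes with forming $\log Z_t(e^sX,Y)-\log Z_t(X,Y)$ and with $\partial_s^m|_{s=0}$, because every coefficient is polynomial in $t$. The substitution is therefore legitimate at the level of formal series.

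Under the hypotheses of \cref{cor:positive-specialization} (nonnegative finite alphabets with $\max\{1,q\}x_iy_j<1$), $\mathbb{P}_{-q}^{X,Y}$ is a genuine probability measure. I would check that the formal cumulants coincide with the true ones in that case. The series $\sum_{n \text{ odd}} e^{ns}(1+q^n)p_n(X)p_n(Y)/n$ converges absolutely for $s$ in a neighborhood of $0$, since $p_n(X)p_n(Y)$ is a finite sum of terms $(x_iy_j)^n$ with $x_iy_j<1$ and $q\,x_iy_j<1$. Consequently $\EE[e^{s|\lambda|}]$ is finite and analytic near $s=0$, and it equals $\Ht(e^sX+e^sqX;Y)/\Ht(X+qX;Y)$. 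Termwise differentiation then gives \eqref{eq:size-cumulants-negative-q}.

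Finally, I would note the splitting mentioned in the introduction. Since
\[
        (1+q^n)p_n(X)=p_n(X)+p_n(qX),
\]
each cumulant is the sum of the shifted Schur size cumulant for $(X,Y)$ and the one for $(qX,Y)$. This matches the factorization $\Ht(X+qX;Y)=\Ht(X;Y)\Ht(qX;Y)$.

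I expect no real obstacle. The only step that needs a sentence of justification is the analytic convergence near $s=0$ in the positive case, and it follows from the strict inequality \eqref{eq:convergence-positive}.
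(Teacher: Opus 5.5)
Your proposal is correct and matches the paper's argument. The paper states this corollary without a separate proof, as the immediate specialization of \eqref{eq:size-cumulants-main}: substitute $t=-q$ and use $1-(-q)^n=1+q^n$ for odd $n$. Your additional check that the formal cumulants coincide with the genuine ones under \eqref{eq:convergence-positive}, and your remark on the splitting into the $X$ and $qX$ contributions, are both valid refinements that the paper leaves implicit.
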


\section{Rectangular specialization}

Let
\begin{equation}\label{eq:rectangular-specialization}
        X=(\underbrace{x,\ldots,x}_{M}),
        \qquad
        Y=(\underbrace{y,\ldots,y}_{N}),
\end{equation}
and set $L=MN$.  Then
\begin{equation}\label{eq:rectangular-Z}
        Z_t
        =
        \left(
        \frac{(1+xy)(1-txy)}{(1-xy)(1+txy)}
        \right)^L.
\end{equation}
The Pfaffian symbol becomes
\begin{equation}\label{eq:rectangular-Jt}
        J_t(z)
        =
        \left(\frac{1+xz}{1-xz}\right)^M
        \left(\frac{1-txz}{1+txz}\right)^M
        \left(\frac{1-yz^{-1}}{1+yz^{-1}}\right)^N.
\end{equation}
Equivalently, after clearing negative powers in the last factor,
\begin{equation}\label{eq:rectangular-Jt-rational}
        J_t(z)
        =
        \left(\frac{1+xz}{1-xz}\right)^M
        \left(\frac{1-txz}{1+txz}\right)^M
        \left(\frac{z-y}{z+y}\right)^N.
\end{equation}
The scalar kernel is the coefficient of \eqref{eq:kernel-generating}; in
regions where contour integral notation is justified, it can be written
as
\begin{equation}\label{eq:rectangular-integral}
        K_t(u,v)
        =
        \varepsilon(u,v)\frac{1}{(2\pi i)^2}
        \oint\oint
        \frac12 J_t(z)J_t(w)\frac{z-w}{z+w}
        \frac{dz\,dw}{z^{u+1}w^{v+1}}.
\end{equation}
The contours are chosen to realize the same Laurent expansion as in
\cref{def:Matsumoto-kernel}.  In the positive specialization $t=-q$, the
poles coming from the first alphabet are at
\[
        z=\frac1x,
        \qquad
        z=\frac1{qx}\quad(q>0),
\]
while the pole coming from $F_Y(-z^{-1})$ is at $z=-y$.  The convergence
condition $\max\{1,q\}xy<1$ permits nested circular contours realizing
the expansion \eqref{eq:kernel-expansion-convention}; for example, for
$q>0$ one may choose radii
\[
        y<R_w<R_z<\min\left\{\frac1x,\frac1{qx}\right\},
\]
and for $q=0$ one may choose
\[
        y<R_w<R_z<\frac1x.
\]
Thus the contours enclose the pole at $-y$, avoid the poles at $1/x$ and
$1/(qx)$, and satisfy $|w|<|z|$.

The size cumulants reduce to
\begin{equation}\label{eq:rectangular-cumulants}
        \kappa_m(|\lambda|)
        =
        2L\sum_{\substack{n\geq1\\ n\text{ odd}}}
        n^{m-1}(1-t^n)(xy)^n.
\end{equation}
In particular, for $t=-q$,
\begin{equation}\label{eq:rectangular-cumulants-negative-q}
        \kappa_m(|\lambda|)
        =
        2L\sum_{\substack{n\geq1\\ n\text{ odd}}}
        n^{m-1}(1+q^n)(xy)^n.
\end{equation}

\section{The bridge to the two-color lift}

At the positive specialization $t=-q$,
\begin{equation}\label{eq:bridge-plethysm}
        \cQ_\lambda(X;-q)=Q_\lambda[X+qX].
\end{equation}
The creation symbol factorizes as
\begin{equation}\label{eq:bridge-symbol-factorization}
        F_{X+qX}(z)=F_X(z)F_{qX}(z).
\end{equation}
Equivalently, if $\Gamma_-(A)$ denotes the Schur $Q$ creation half-vertex
operator with symbol $F_A(z)$, then
\begin{equation}\label{eq:bridge-factorization}
        \Gamma_-^{(-q)}(X)=\Gamma_-(X)\Gamma_-(qX).
\end{equation}

On the symmetric-function side, the same factorization is expressed by
the Schur $Q$ addition formula
\begin{equation}\label{eq:bridge-addition}
        Q_\lambda[X+qX]
        =
        \sum_{\mu\subseteq\lambda}
        Q_{\lambda/\mu}(X)Q_\mu(qX),
\end{equation}
where $Q_{\lambda/\mu}$ denotes the standard skew Schur $Q$-function.
Thus the positive specialization admits the two-color refinement
\begin{equation}\label{eq:bridge-two-color-weight}
        W^{X,Y,q}(\mu,\lambda)
        =
        Q_{\lambda/\mu}(X)Q_\mu(qX)P_\lambda(Y),
        \qquad
        \mu\subseteq\lambda,\quad \mu,\lambda\in\SP.
\end{equation}
Summing over the intermediate strict partition $\mu$ gives
\[
        \sum_{\mu\subseteq\lambda}W^{X,Y,q}(\mu,\lambda)
        =
        Q_\lambda[X+qX]P_\lambda(Y),
\]
which is the one-time shifted $t$-Schur weight at $t=-q$.  The
two-color lift retains the intermediate strict partition $\mu$ instead
of summing it out.

\end{document}